\newtheorem{theorem}{Theorem}
\newtheorem{corollary}{Corollary}
\newtheorem{lemma}{Lemma}
\newtheorem{definition}{Definition}
\theoremstyle{remark}
\newtheorem{remark}{Remark}
\newtheorem{example}{Example}
\def\F{\mathcal{F}}
\def\E{\mathcal{E}}
\def\bR{\mathbb{R}}
\def\bu{\mathbf{u}}
\def\bf{\mathbf{f}}
\def\bP{\mathbb{P}}
\def\bone{\mathbf{1}}
\def\Tr{\mathrm{Tr}}
\def\diag{\mathrm{diag}}
\begin{document}
\title{On Markovian solutions to Markov Chain BSDEs}
\author{Samuel N. Cohen\\Lukasz Szpruch\\ University of Oxford}
\date{\today}

\maketitle
\begin{quotation}
\emph{Dedicated to Charles Pearce on the occasion of his 70th Birthday}
\end{quotation}

\begin{abstract}
We study (backward) stochastic differential equations with noise coming from a finite state Markov chain. We show that, for the solutions of these equations to be `Markovian', in the sense that they are deterministic functions of the state of the underlying chain, the integrand must be of a specific form. This allows us to connect these equations to coupled systems of ODEs, and hence to give fast numerical methods for the evaluation of Markov-Chain BSDEs.

Keywords: BSDE, Coupled ODE, numerical solution

MSC: 	60J27, 	60H05, 	34A12
\end{abstract}

\section{Introduction}
Over the past 20 years, the role of stochastic methods in control has been increasing. In particular, the theory of Backward Stochastic Differential Equations, initiated by Pardoux and Peng \cite{Pardoux1990}, has shown itself to be a useful tool for the analysis of a variety of stochastic control problems (see, for example, El Karoui, Peng and Quenez \cite{El1997} for a review of applications in finance, or Yong and Zhou \cite{Yong1999} for a more general control perspective). Recent work \cite{Cohen2008, Cohen2008b} has considered these equations where noise is generated by a continuous-time finite-state Markov Chain, rather than by a Brownian motion.

Applications of BSDEs frequently depend on the ability to compute solutions to these equations numerically. While part of the power of the theory of BSDEs is its ability to deal with non-Markovian control problems, the numerical methods that have been developed are typically still restricted to the Markovian case (see, for example, \cite{Bouchard2004, Bender2007}). In this paper, we ask the question
\begin{quotation}
 When does a (B)SDE with underlying noise from a Markov Chain admit a `Markovian' solution, that is, one which can be written as a deterministic function of the current state of the chain?
\end{quotation}
As we shall see, such a property implies strong restrictions on the parameters of the (B)SDE. However, these restrictions form a type of nonlinear Feynman-Kac result, connecting solutions of these SDEs to solutions of coupled systems of ODEs. This connection yields simple methods of obtaining numerical solutions to a wide class of BSDEs in this context.

\section{Markov Chains and SDEs}
\subsection{Martingales and Markov Chains}
Consider a continuous-time finite-state Markov chain $X$ on a probability space $(\Omega, \bP)$. (The case where $X$ is a countable state process can also be treated in this manner, we exclude it only for technical simplicity.) Without loss of generality, we shall represent $X$ as taking values from the standard basis vectors $e_i$ of $\bR^N$, where $N$ is the number of states. An element $\omega\in\Omega$ can be thought of as describing a path of the chain $X$.

Let $\{\F_t\}$ be the completion of the filtration generated by $X$, that is,
\[\F_t = \sigma(\{X_s\}_{s\leq t}) \vee \{A\in\F:\bP(A)=0\}.\]
 As $X$ is a right-continuous pure jump process which does not jump at time $0$, this filtration is right-continuous. We assume that $X_0$ is deterministic, so $\F_0$ is the completion of the trivial $\sigma$-algebra.

Let $A$ denote the rate matrix\footnote{In our notation, $A$ is the matrix with entries $A_{ij}$, where $A_{ij}$ is the rate of jumping from state $j$ to state $i$. Depending on the convention used, this is either the rate matrix or its transpose.} of the chain $X$. As we do not assume time-homogeneity, $A$ is permitted to vary (deterministically) through time. We shall assume for simplicity that the rate of jumping from any state is bounded, that is, all components of $A$ are uniformly bounded in time. Note that $(A_t)_{ij}\geq 0$ for $i\neq j$ and $\sum_i A_{ij} = 0$ for all $j$ (the columns of $A$ all sum to $0$).

It will also be convenient to assume that $\bP(X_t = e_i)>0$ for any $t>0$ and any basis vector $e_i\in\bR^N$, that is, there is instant access from our starting state to any other state of the chain. None of our results depend on this assumption in any significant way, however without it, we shall be constantly forced to specify very peculiar null-sets, for states which cannot be accessed before time $t$. If we were to assume time-homogeneity (that is, $A$ is constant in $t$), this assumption would simply be that our chain is irreducible. However, this assumption does not mean that $A_{ij}>0$ for all $i\neq j$.

From a notational perspective, as $e_i$ denotes the $i$th standard basis vector in $\mathbb{R}^N$, the $i$th component of a vector $v$ is written $e_i^*v$, where $[\cdot]^*$ denotes vector transposition. For example, this implies that useful quantities can be written simply in terms of vector products. For example, we have $I_{X_t=e_i} = e_i^*X_t$.

\subsection{Markov-Chain SDEs}
We now relate our Markov chain to a $N$-dimensional martingale process, with which we can study SDEs. To do this, we write our chain in the following way
\[X_t = X_0 + \int_{]0,t]} A_u X_{u-} du + M_t\]
where $M$ is a locally-finite-variation pure-jump martingale in $\bR^N$. Our attention is then on the properties of stochastic integrals with respect to $M$.

We shall make some use of the following seminorm, which arises from the It\=o isometry.
\begin{definition}
Let $Z$ be a vector in $\bR^N$. Define the stochastic seminorm
\[\|Z\|^2_{M_t} = \Tr\left(Z^* \frac{d\langle M, M\rangle_t}{dt} Z\right)\]
where $\Tr$ denotes the trace and
\begin{equation} \label{eq:Mnorm}
 \frac{d\langle M, M\rangle_t}{dt} = \diag(A_tX_{t-}) - A_t\diag(X_{t-}) - \diag(X_{t-})A_t^*=:\Psi(A_t,X_t)
\end{equation}

 the matrix of derivatives of the quadratic covariation matrix of $M$. This seminorm has the property that
\[E\left[\int_{]0,t]} \|Z_u\|^2_{M_u} du\right] = E\left[\left(\int_{]0,t]} Z_u^*dM_u\right)^2\right]\]
for any predictable process $Z$ of appropriate dimension. We define the equivalence relation $\sim_{M}$ on the space of predictable processes by $Z\sim_{M} Z'$ if and only if $\|Z_t-Z'_t\|_{M_t} = 0$ $dt\times d\mathbb{P}$-a.s.
\end{definition}

\begin{remark}
A consequence of this choice of seminorm is that $Z+ c\bone \sim_M Z$ for any $Z$ and any predictable scalar process $c$. This is simply because $\sum_i e_i^* Ae_j =\sum_i A_{ij}= 0$ for all $j$, and so all row and column sums of $\Psi(A_t, X_t)$ are zero.
\end{remark}

\begin{theorem}
Every scalar square-integrable martingale $L$ can be written in the form
\[L_t = L_0 + \int_{]0,t]} Z_s^* dM_s\]
for some predictable process $Z$ taking values in $\bR^N$. The process $Z$ is unique up to equivalence $\sim_M$.
\end{theorem}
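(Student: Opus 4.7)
The strategy is to prove the representation in two steps: first obtain an expression for $L$ as a stochastic integral against the compensated jump measure of $X$, and then translate that expression into a stochastic integral against $M$.

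\textbf{Step 1 (representation against the jump measure).} Because $X$ is a right-continuous pure jump process with finite state space and $\F$ is its augmented natural filtration, the jumps of $X$ form a marked point process $\mu(ds,dy) = \sum_{s:\,\Delta X_s \ne 0}\delta_{(s,X_s)}(ds,dy)$ on $]0,\infty[\,\times\,\{e_1,\dots,e_N\}$. Since $A$ is bounded, $\mu$ admits a finite $\F$-predictable compensator $\nu$ whose density on $\{e_i\}$ is $(e_i^*A_s X_{s-})\,\bone_{e_i \ne X_{s-}}$. The standard martingale representation theorem for marked point processes (e.g.\ Jacod--Shiryaev, Theorem~III.4.37) then produces a predictable $h(s,y)$ with $h(s,X_{s-})=0$ and $E\int_{]0,t]}\!\!\int |h|^2\,d\nu < \infty$ such that
\[L_t = L_0 + \int_{]0,t]}\!\!\int h(s,y)\,(\mu-\nu)(ds,dy).\]

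\textbf{Step 2 (translation to $Z^*dM$).} Define the $\bR^N$-valued predictable process $Z_s$ by $(Z_s)_i := h(s,e_i)$. Using $dM_s = dX_s - A_s X_{s-}\,ds$ and the identity $Z_s^* \Delta X_s = h(s, X_s) - h(s, X_{s-}) = h(s, X_s)$ at each jump time, a direct computation shows that $\int_{]0,t]} Z_s^* dM_s$ recovers term by term the right-hand side of Step~1, with the compensator contribution from $dM$ producing exactly the $\nu$-integral. The required finiteness of $Z$ in the $\|\cdot\|_{M_s}$ seminorm follows from $E\int |h|^2\,d\nu<\infty$ via the It\=o isometry.

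\textbf{Uniqueness and main obstacle.} If $\int Z^* dM$ and $\int (Z')^* dM$ are indistinguishable, the It\=o isometry recalled in the definition above yields $E\int_0^t \|Z_s - Z_s'\|_{M_s}^2\,ds = 0$ for every $t$, i.e.\ $Z \sim_M Z'$. The substantive ingredient is the marked point process representation theorem; its hypotheses are met here because $\F$ is generated by $X$ alone (so that no $\F$-martingale can be orthogonal to $\mu-\nu$) and the compensator $\nu$ is bounded. Everything else is straightforward bookkeeping using $dM = dX - A X_{-}\,dt$.
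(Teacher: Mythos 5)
Your argument is correct, but there is nothing in the paper to compare it against: the authors do not prove this theorem, they simply cite \cite{Cohen2008}. What you have written is essentially the standard route that underlies such citations — view the jumps of $X$ as a marked point process with marks in $\{e_1,\dots,e_N\}$, invoke the martingale representation theorem for marked point processes (the filtration generated by $X$ with $X_0$ deterministic coincides with that generated by the jump measure, so the theorem applies), and then repackage the integrand $h(s,\cdot)$ as the vector $Z_s$. The translation in Step 2 is the only place where care is needed, and you have handled it: the normalisation $h(s,X_{s-})=0$ (harmless, since $\mu$ never charges the diagonal and $\nu$ has zero density there) is exactly what makes $Z_s^*\Delta X_s=h(s,X_s)$ and what kills the diagonal term when matching $\int Z_s^*A_sX_{s-}\,ds$ with the $\nu$-integral; it would be worth one explicit sentence saying why this normalisation can be imposed without loss of generality. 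The uniqueness argument via the It\^o isometry is exactly the paper's notion of $\sim_M$ equivalence and is fine.
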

\begin{proof}
 See \cite{Cohen2008}.
\end{proof}

The key SDEs which we shall study are equations of the form
\begin{equation}\label{eq:dynamics}
dY_t = -f(\omega, t, Y_{t-}, Z_t) dt + Z_t^* dM_t
\end{equation}
where $f:\Omega\times\bR^+\times\bR\times\bR^N\to\bR$ is a progressively measurable function, $Y$ is an adapted process with $E[\sup_{t<T} Y_t^2]<\infty$ for all $T$ and $Z$ is predictable. A solution $Y$ to this equation is a special semimartingale, hence the canonical decomposition into a predictable part ($-\int f(\omega, t, Y_{t-}, Z_t) dt$) and a martingale part ($\int Z_t^* dM_t$) is unique.

We shall make the general assumption that $f(\omega,t,y,\cdot)$ is invariant with respect to equivalence $\sim_M$, that is,
\[\text{if }Z\sim_M Z'\text{ then }f(\omega,t,y,Z)= f(\omega,t,y,Z')\] up to indistinguishability. This assumption is important, as it ensures that $f$ only considers $Z$ in the same way as it affects the integral $\int Z^* dM$.

In terms of existence  and uniqueness of solutions to (\ref{eq:dynamics}) we shall focus on two key cases,
\begin{itemize}
 \item first, when $Y_0\in\mathbb{R}$ and $\{Z_t\}_{t\geq 0}$ a predictable process are given, and so (\ref{eq:dynamics}) is a forward SDE, and
 \item second, when $Y_T \in L^2(\F_T)$ is given for some $T>0$, and $Z_t$ is chosen to solve the backward SDE, that is, to ensure that $Y$ is adapted and $Z$ is predictable.
\end{itemize}
For the forward equation, the existence and uniqueness of solution process $Y$ is classical, under various assumptions on the driver $f$. For the backward equation, under the assumption of Lipschitz continuity of the driver $f$, a result on existence and uniqueness of the pair $(Y,Z)$ is given in \cite{Cohen2008} (where $Z$ is unique up to equivalence $\sim_M$, as defined in the following definition). In this paper, we shall not focus on determining conditions on $f$ such that existence and uniqueness of solutions holds, but shall always assume that sufficient conditions are placed on $f$ such that the equation of interest has a unique solution.

\begin{lemma}\label{lem:ZuniquesimM}
For any pair $(Y,Z)$ satisfying (\ref{eq:dynamics}), the pair $(Y, Z')$ also satisfies (\ref{eq:dynamics}) if and only if  $Z \sim_M Z'$. 
\end{lemma}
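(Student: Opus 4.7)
The plan is to reduce both implications to the uniqueness of the canonical decomposition of a special semimartingale, together with the Itô-isometry property of the seminorm $\|\cdot\|_{M_t}$ stated in the definition. The standing invariance of $f(\omega,t,y,\cdot)$ under $\sim_M$ takes care of the drift, and the isometry takes care of the martingale part. The two directions are then essentially symmetric.

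For the ``if'' direction, I would assume $Z\sim_M Z'$ and first observe that the invariance assumption on $f$ forces the drift processes $f(\omega,t,Y_{t-},Z_t)$ and $f(\omega,t,Y_{t-},Z'_t)$ to be indistinguishable, so the finite-variation parts agree. For the martingale parts, the Itô isometry gives
\[
E\!\left[\Big(\int_{]0,t]}(Z_u-Z'_u)^*dM_u\Big)^{\!2}\right]
= E\!\left[\int_{]0,t]}\|Z_u-Z'_u\|_{M_u}^2\,du\right] = 0,
\]
so the two stochastic integrals are indistinguishable and $(Y,Z')$ solves (\ref{eq:dynamics}).

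For the ``only if'' direction, suppose both $(Y,Z)$ and $(Y,Z')$ solve (\ref{eq:dynamics}). Subtracting the two versions of (\ref{eq:dynamics}) yields
\[
\int_{]0,t]}(Z_s-Z'_s)^*dM_s
= \int_{]0,t]}\!\big[f(\omega,s,Y_{s-},Z_s)-f(\omega,s,Y_{s-},Z'_s)\big]\,ds.
\]
The left-hand side is a local martingale starting at $0$, while the right-hand side is a predictable finite-variation process starting at $0$. By uniqueness of the canonical decomposition of the zero special semimartingale, both sides must be indistinguishable from zero. Applying the Itô isometry to the left-hand side then forces $E\!\left[\int_0^t\|Z_u-Z'_u\|_{M_u}^2\,du\right]=0$ for every $t$, which is precisely the statement that $Z\sim_M Z'$.

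The main point requiring a little care is the justification that the stochastic integrals above are genuine (square-integrable) martingales, so that the isometry applies globally rather than only after localisation. This follows from the standing assumption $E[\sup_{t<T}Y_t^2]<\infty$ together with the square-integrability built into the definition of a solution to (\ref{eq:dynamics}); in the BSDE setting the analogous $L^2$ bounds on $Z$ and $Z'$ are provided by the existence result in \cite{Cohen2008}. Apart from this integrability bookkeeping, the proof is purely a matter of reading off the canonical decomposition.
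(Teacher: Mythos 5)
Your proof is correct and follows essentially the same route as the paper's: the ``only if'' direction rests on the uniqueness of the canonical decomposition of the special semimartingale $Y$ plus the It\^o isometry to pass from indistinguishable stochastic integrals to $Z\sim_M Z'$, and the ``if'' direction uses the standing invariance of $f$ under $\sim_M$ together with the isometry. The paper states this in two sentences; you have merely spelled out the same argument (including the integrability bookkeeping) in more detail.
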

\begin{proof}
By the canonical decomposition of $Y$ into a finite variation and a martingale part, we see that $\int_{]0,t]} Z^* dM = \int_{]0,t]} Z'^*dM$ up to indistinguishability, and so $Z \sim_M Z'$. Conversely, if $Z \sim_M Z'$, then by our assumption on $f$, $(Y, Z')$ will also satisfy (\ref{eq:dynamics}), up to indistinguishability.
\end{proof}

\begin{lemma}\label{lem:jumptimesenough}
For any martingale $L$, let $\Delta L$ denote the jumps of $L$. If $Z$ is a predictable process such that $\Delta L_t = Z_t \Delta M_t$ up to indistinguishability, then $L_t = L_0 + \int_{]0,t]} Z_u^* dM_u$, and $Z$ is unique up to equivalence  $\sim_M$.
\end{lemma}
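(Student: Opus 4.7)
The plan is to reduce the statement to the martingale representation theorem stated above, together with Lemma \ref{lem:ZuniquesimM}. First I would apply the representation theorem to obtain a predictable $\tilde Z$ with $L_t = L_0 + \int_{]0,t]} \tilde Z_u^* dM_u$. Since this stochastic integral has jump $\tilde Z_t^*\Delta M_t$, and by hypothesis $L$ has jump $Z_t^*\Delta M_t$, the process $W:=Z-\tilde Z$ satisfies $W^*\Delta M=0$ up to indistinguishability. It then suffices to show $Z\sim_M \tilde Z$, for Lemma \ref{lem:ZuniquesimM} will then give $\int Z^*dM=\int \tilde Z^*dM=L-L_0$, and applying the same equivalence argument to two candidates $Z,Z'$ yields the claimed uniqueness up to $\sim_M$.

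The core step is to upgrade the pointwise-at-jump-times identity $W^*\Delta M=0$ to the $dt\times d\bP$-a.s.\ identity $\|W_t\|^2_{M_t}=0$. I would exploit the explicit compensator of the jump measure of $X$: for each ordered pair $i\neq j$, the counting process $N^{(i,j)}_t=\sum_{s\leq t}\bone_{\{X_{s-}=e_j,\,X_s=e_i\}}$ admits predictable compensator $\int_{]0,t]}\bone_{\{X_{u-}=e_j\}}(A_u)_{ij}\,du$. Since at every jump counted by $N^{(i,j)}$ we have $\Delta M=e_i-e_j$, the non-negative predictable integrand $H^{(i,j)}_u=(W_{u,i}-W_{u,j})^2\bone_{\{X_{u-}=e_j\}}$ integrates to zero against $dN^{(i,j)}$ (every summand vanishes by hypothesis). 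Equating expectations of the integrals against $N^{(i,j)}$ and against its compensator gives $E\bigl[\int_0^t H^{(i,j)}_u (A_u)_{ij}\,du\bigr]=0$, hence $(A_u)_{ij}H^{(i,j)}_u=0$ $dt\times d\bP$-a.s.

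A short direct computation from the definition of $\Psi(A_t,X_t)$ in \eqref{eq:Mnorm} shows that on $\{X_{t-}=e_j\}$ one has $\|W_t\|^2_{M_t}=\sum_{i\neq j}(A_t)_{ij}(W_{t,i}-W_{t,j})^2$, so summing the pointwise identities above over $i\neq j$ and over $j$ yields $\|W_t\|^2_{M_t}=0$ $dt\times d\bP$-a.s., i.e.\ $Z\sim_M\tilde Z$, completing the argument. The main obstacle I anticipate is a technical one: applying the compensator formula freely requires enough integrability of $W$, which may necessitate a localisation (for instance stopping at $\tau_n=\inf\{t:\int_0^t\|W_u\|^2_{M_u}du\ge n\}$ and arguing on each $[0,\tau_n]$) so that the compensator identity can be invoked without an a priori $L^2(M)$ bound on $Z$ itself.
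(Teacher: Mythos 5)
Your proposal is correct and follows the same overall route as the paper: invoke the martingale representation theorem to obtain $\tilde Z$, observe that $W:=Z-\tilde Z$ satisfies $W^*\Delta M=0$ up to indistinguishability, and upgrade this jump-time identity to $\|W_t\|_{M_t}=0$ $dt\times d\bP$-a.e.\ by means of the predictable compensator of the chain's jump structure. Where you genuinely differ is in how the upgrade is executed, and your version is the more careful one: the paper asserts that $\Delta M_t=0$ on the predictable set $B=\{\|Z_t-\tilde Z_t\|_{M_t}\neq 0\}$, which is not literally forced by the hypothesis --- on $B$ a jump from $e_j$ to some $e_{i'}$ with $W_{t,i'}=W_{t,j}$ can perfectly well occur while a different accessible state $e_i$ has $(A_t)_{ij}>0$ and $W_{t,i}\neq W_{t,j}$ --- whereas your decomposition over ordered pairs via the compensators of the counting processes $N^{(i,j)}$, combined with the identity $\|W_t\|^2_{M_t}=\sum_{i\neq j}(A_t)_{ij}(W_{t,i}-W_{t,j})^2$ on $\{X_{t-}=e_j\}$, uses only the information the hypothesis actually provides and so closes this small gap. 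The integrability issue you flag is real but routine, and the localisation you propose handles it.
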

\begin{proof}
Let $L$ have a representation $dL_t = Z'_t dM_t$ for some predictable process $Z'$. Then as $\Delta L_t = Z_t \Delta M_t$ up to indistinguishability, we must have that $B=\{(\omega, t): \|Z_t- Z'_t\|_{M_t}\neq 0\}$ is a predictable set such that $\Delta M_t =0$ on $B$. One can verify that a predictable set with this property is the union of a $dt\times d\mathbb{P}$-null set and a set on which $A_tX_{t-} \equiv \mathbf{0}$. However, if $A_tX_{t-}=0$ then $d\langle M,M\rangle/dt = 0$, and so we see that $Z\sim_M Z'$. As the martingale representation is unique up to equivalence $\sim_M$, we have our result.
\end{proof}

\section{Markovian solutions to (B)SDEs}

\begin{definition}\label{def:Markovian}
We say a stochastic process $Y$ is \emph{Markovian} if, up to indistinguishability, $Y_t$ depends on $\omega$ only as a function of $X_t$, that is, it can be written as
\[Y_t \equiv u(t, X_t)\]
for some deterministic function $u$.
\end{definition}
We note that this is, in some ways, a misnomer, as a process satisfying Definition \ref{def:Markovian} need not be Markovian in the sense that $Y_t$ is conditionally independent of $Y_r$ given $Y_s$, for all $r<s<t$. Nevertheless, this terminology is standard in the theory of BSDEs, and describes adequately the property of interest.

Our key result on the structure of Markovian solutions to (\ref{eq:dynamics}) is the following.

\begin{theorem}\label{thm:markovstructure}
Suppose $Y$ has dynamics given by (\ref{eq:dynamics}). Then the following statements are equivalent:
\begin{enumerate}
\item  $Y$ is of the form $Y_t=u(t,X_t)$ for some function $u:\bR^+\times\bR^N\to\bR.$
\item  Both
\begin{enumerate}[(i)]
\item $f$ is Markovian, that is, it is of the form
\[f(\omega, t, Y_t, Z_t) = \tilde f(X_t,t,Y_t,Z_t) \quad \mathbb{P}\times dt-a.e.\]
for some function $\tilde f: \bR^N\times \bR^+\times\bR\times\bR^N\to\bR$ and
\item up to $\sim_M$ equivalence, $Z_t$ is a deterministic function of $t$ which satisfies $X_0^*Z_0 = Y_0$ and for all $i$
\[\frac{d (e_i^*Z_t)}{dt} = -\tilde f(e_i, t, e_i^*Z_t, Z_t)  - Z_t^* A_t e_i.\]
\end{enumerate}
\end{enumerate}
Furthermore, under either set of conditions, $e_i^*Z_t = u(t, e_i)$ up to equivalence $\sim_M$.
\end{theorem}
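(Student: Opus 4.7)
The plan is to prove the two implications separately, linked through the natural candidate $\hat Z_t\in\bR^N$ defined by $e_i^*\hat Z_t:=u(t,e_i)$. Note $\hat Z_t$ is deterministic and Borel in $t$, since for $t>0$ one has $u(t,e_i)=\bP(X_t=e_i)^{-1}E[Y_t I_{X_t=e_i}]$.

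For $(2)\Rightarrow(1)$, I would set $Y_t:=X_t^*Z_t$ and verify (\ref{eq:dynamics}) directly. Because $Z$ is deterministic and absolutely continuous (so has no jumps and no covariation with $X$), integration by parts gives
\[d(X_t^*Z_t)=X_{t-}^*\frac{dZ_t}{dt}\,dt+Z_t^*A_tX_{t-}\,dt+Z_t^*dM_t.\]
On $\{X_{t-}=e_i\}$ the ODE substitutes $X_{t-}^*(dZ_t/dt)=-\tilde f(e_i,t,e_i^*Z_t,Z_t)-Z_t^*A_te_i$, the $Z_t^*A_tX_{t-}$ contributions cancel, and we obtain $dY_t=-\tilde f(X_{t-},t,Y_{t-},Z_t)\,dt+Z_t^*dM_t=-f(\omega,t,Y_{t-},Z_t)\,dt+Z_t^*dM_t$ using Markovianity of $f$. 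The initial condition $Y_0=X_0^*Z_0$ is given, and uniqueness of the canonical decomposition (Lemma \ref{lem:ZuniquesimM}) identifies this $Y$ with the SDE solution; setting $u(t,x):=x^*Z_t$ realises $Y_t=u(t,X_t)$.

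For $(1)\Rightarrow(2)$, I would split into a jump step (which identifies $Z$) and a drift step (which identifies $\tilde f$ and the ODE). The drift part of $Y$ is continuous, so $\Delta Y_t=Z_t^*\Delta M_t$; on the other hand, $\Delta Y_t=u(t,X_t)-u(t,X_{t-})=\hat Z_t^*(X_t-X_{t-})=\hat Z_t^*\Delta M_t$. Applying Lemma \ref{lem:jumptimesenough} to the martingale part of $Y$ with integrand $\hat Z$ yields $Z\sim_M\hat Z$, giving both the deterministic form asserted in (ii) and the concluding identity $e_i^*Z_t=u(t,e_i)$. For the drift, between consecutive jumps of $X$ on $\{X_{t-}=e_j\}$ we have $dM_t=-A_te_j\,dt$ (since $dX_t\equiv 0$ off the jump times), so (\ref{eq:dynamics}) reads $dY_t/dt=-f(\omega,t,u(t,e_j),Z_t)-Z_t^*A_te_j$, while $Y_t=u(t,e_j)$ forces this derivative to equal $\partial_tu(t,e_j)$. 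The left-hand side being deterministic, together with $\sim_M$-invariance of $f$ and $Z\sim_M\hat Z$ deterministic, implies the right-hand side depends on $\omega$ only through $e_j=X_{t-}$. One can therefore define $\tilde f(e_j,t,u(t,e_j),\hat Z_t):=-\partial_tu(t,e_j)-\hat Z_t^*A_te_j$ (extending freely elsewhere in a $\sim_M$-invariant way), giving (i); rearranging yields the ODE in (ii). The initial condition is $X_0^*Z_0=u(0,X_0)=Y_0$.

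The main technical obstacle is ensuring that $u(\cdot,e_j)$ is a genuine absolutely continuous deterministic function of $t$, since a priori we know only that $u(t,e_j)=Y_t$ on the random set $\{X_t=e_j\}$. The standing hypothesis $\bP(X_t=e_j)>0$ for every $t>0$ and every $j$ is what makes the localisation clean: on any short interval there is positive probability that $X$ remains in state $e_j$ throughout, on which $Y$ is AC with the stated drift, and matching deterministic quantities on a positive-probability event forces the a.e.\ ODE to hold for $u(\cdot,e_j)$ as a deterministic function.
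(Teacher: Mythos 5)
Your proof is correct and follows essentially the same route as the paper's: identify $Z$ through the jumps via Lemma \ref{lem:jumptimesenough}, read off $\tilde f$ and the ODE from the drift on the positive-probability events where $X$ stays in a single state, and use uniqueness of solutions for the converse. The only cosmetic differences are that in $(2)\Rightarrow(1)$ you verify the dynamics of $X_t^*Z_t$ by the product rule rather than by induction over the successive jump times, and that the uniqueness you invoke there should be uniqueness of the forward solution $Y$ given $Z$ and $Y_0$ (the paper's standing assumption) rather than Lemma \ref{lem:ZuniquesimM}, which concerns uniqueness of $Z$ given $Y$.
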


\begin{proof}
\emph{1 implies 2.} From (\ref{eq:dynamics}), $Y$ is continuous except at a jump of $X$. By boundedness of $A$, for any bounded interval $[a,b]$, with positive probability there will not be a jump of $X$ in $[a,b]$. Therefore, for each $i$, considering the non-null set $\{\omega: X_t=e_i\text{ for all }t\in[a,b]\}$ we see that $u(\cdot, e_i)$ must be continuous on every bounded interval $[a,b]$.

Consider a jump of $X$ which occurs at the stopping time $\tau$. Let $L_t := \int_0^t Z_u^*dM_u$, that is $L$ is the martingale part of $Y$. Then from (\ref{eq:dynamics})
\[ \Delta L_\tau = Y_\tau-Y_{\tau-} = Z_\tau^* \Delta M_t = Z_\tau^* (X_\tau-X_{\tau-}).\]
Define the process $Z'_t$ with components
\[e_i^*Z'_t = u(t, e_i).\]
Then we have $(Z'_\tau)^*X_\tau = Y_\tau$ and $(Z'_\tau)^*X_{\tau-} = Y_{\tau-}$, for every jump of $X$. Hence $Z'$ is a predictable (indeed, deterministic) process such that $\Delta L = Z'\Delta M$, and by Lemma \ref{lem:jumptimesenough}, we see $Z \sim_M Z'$.

Now note that, except on the thin set $\{\Delta X_t\neq 0\}$,
\[\frac{d}{dt}u(t, X_t) = \frac{dY_t}{dt} = -f(\omega, t, Y_t, Z_t) - Z_t^* A_t X_{t-}.\]
Considering these dynamics on the non-null set $\{\omega: X_t=e_i\text{ for all }t\in[a,b]\}$, we obtain
\begin{equation}\label{eq:fixdynamics}
 \frac{d}{dt}u(t, e_i) = -f(\omega, t, Y_t, Z_t) - Z_t^* A_t e_i.
\end{equation}
As $Z_t$ is deterministic, by rearrangement, we see that
\[f(\cdot, t, Y_t, Z_t) = - \frac{d}{dt}u(t, X_t) - Z_t^*A_t X_{t-}\]
 does not vary with $\omega$ on the sets where $X_t$ is constant. Therefore, we can write $\tilde f(X_t, t, Y_t, Z_t)$ as the common value taken by $f$ on these sets. Finally, we see that replacing $f$ by $\tilde f$  and using the fact that $e_i^*Z_t = u(t, e_i)$ in (\ref{eq:fixdynamics}) yields the desired dynamics for $Z$.

\emph{2 implies 1.} By uniqueness of solutions to (\ref{eq:dynamics}), as $Y_0= X_0^* Z_0$ and $Z$ has the prescribed dynamics, we know that $Y_t = X_0^*Z_t$ up to the first jump of $X$. At the first jump time $\tau$, we see that
\[Y_\tau-Y_{\tau-}=\Delta Y_\tau = Z_\tau^*(X_\tau - X_{\tau-}) = Z_\tau^*X_\tau - Y_{\tau-},\]
and so $Y_\tau = X_\tau^*Z_\tau$ almost surely. Repeating this argument and using induction on the (almost surely countable) sequence of jumps of $X$, we see that $Y_t = X_t^*Z_t$ up to indistinguishability. However $Z_t$ is deterministic, so we can define a deterministic function $u(t, e_i) = e_i^* Z_t$, and we see that $Y_t = u(t, X_t)$.
\end{proof}

The following corollary provides a frequently more convenient way to analyse such equations.
\begin{corollary}\label{cor:uODE}
Let $Y$ be as in Theorem \ref{thm:markovstructure}, with associated function $u$. Write $\bu_t$ for the column vector with elements $u(t, e_i)$. Write $\bf(t,\bu)$ for the column vector with elements $e_i^*\bf(t,\bu_t) := \tilde f(e_i, t, e_i^*\bu_t, \bu_t)$. Then $\bu$ satisfies the vector ordinary differential equation
\begin{equation}\label{eq:uODE}
d\bu_t = -(\bf(t,\bu_t) +A^* \bu_t) dt
\end{equation}
\end{corollary}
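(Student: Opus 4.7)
The corollary is essentially a componentwise-to-vector rewriting of part 2(ii) of Theorem \ref{thm:markovstructure}, so the plan is short. First I would invoke the final clause of Theorem \ref{thm:markovstructure} to identify $Z_t$ with $\bu_t$: since $e_i^*Z_t = u(t,e_i) = e_i^*\bu_t$ for every $i$ (up to $\sim_M$), the predictable process $Z_t$ agrees with the deterministic vector $\bu_t$, and in particular $\tilde f(e_i,t, e_i^*Z_t, Z_t) = \tilde f(e_i,t, e_i^*\bu_t, \bu_t) = e_i^*\bf(t,\bu_t)$ by the definition of $\bf$ given in the statement of the corollary.

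Next I would read off the scalar ODE from part 2(ii),
\[\frac{d}{dt}(e_i^*\bu_t) = \frac{d(e_i^*Z_t)}{dt} = -\tilde f(e_i,t, e_i^*Z_t, Z_t) - Z_t^* A_t e_i,\]
and manipulate the second term using that the right-hand side is a scalar:
\[Z_t^* A_t e_i = \bu_t^* A_t e_i = (A_t^*\bu_t)^* e_i = e_i^*(A_t^*\bu_t).\]
Combining these two substitutions gives
\[e_i^*\frac{d\bu_t}{dt} = -e_i^*\bigl(\bf(t,\bu_t) + A_t^*\bu_t\bigr)\]
for every basis vector $e_i$, which is exactly the componentwise form of \eqref{eq:uODE}.

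Finally, since the identity holds for each $i \in \{1,\dots,N\}$, the $N$ scalar equations assemble into the single vector ODE
\[d\bu_t = -\bigl(\bf(t,\bu_t) + A_t^*\bu_t\bigr) dt,\]
with the understanding that the $A^*$ appearing in \eqref{eq:uODE} is evaluated at time $t$ (in keeping with the paper's convention that $A_t$ may depend on $t$). There is no genuine obstacle: the only mildly delicate points are correctly transposing the scalar $Z_t^*A_t e_i$ so that the drift matrix in the vector ODE is $A_t^*$ rather than $A_t$, and recalling that the identification $Z_t = \bu_t$ only holds up to $\sim_M$, which is harmless because the right-hand side of \eqref{eq:uODE} depends on $\bu_t$ only through the pairing $\tilde f(e_i, t, e_i^*\bu_t, \bu_t)$ and the linear map $A_t^*$, both of which are well defined on equivalence classes.
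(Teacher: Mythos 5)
Your proof is correct and follows the same route as the paper, which simply notes that $\bu = Z$ (from the final clause of Theorem \ref{thm:markovstructure}) and reads off the vector dynamics from part 2(ii); you have merely spelled out the componentwise-to-vector transposition $Z_t^*A_te_i = e_i^*(A_t^*\bu_t)$ that the paper leaves implicit.
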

\begin{proof}
Simply note that $\bu = Z$ in the proof of the theorem, and so the dynamics are as given.
\end{proof}

\begin{corollary}
Let $Y$ be the solution to a BSDE with Markovian terminal condition $Y_T = \phi(X_T)$, for some deterministic function $\phi:\bR^N\to\bR$. Suppose $f$ is Markovian. Then for all $t<T$, $Y_t = u(t,X_t)$, where 
\begin{itemize}
\item $u(T,\cdot) = \phi(\cdot)$, 
\item the associated vector $\bu_t$ satisfies the ODE (\ref{eq:uODE}) and
\item the solution process $Z$ is given by $Z_t=\bu_t$. In particular, note that $Z$ is deterministic and continuous.
\end{itemize}
\end{corollary}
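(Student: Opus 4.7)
The plan is to construct the candidate solution explicitly from the ODE and then appeal to uniqueness of the BSDE.

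First, I would solve the vector ODE (\ref{eq:uODE}) \emph{backward} on $[0,T]$ with terminal condition $\bu_T = (\phi(e_1),\ldots,\phi(e_N))^*$. Under whatever regularity on $f$ is already assumed to guarantee existence and uniqueness for the BSDE (typically uniform Lipschitz continuity of $\tilde f$ in $(y,Z)$), the map $\bu\mapsto \bf(t,\bu)+A_t^*\bu$ is Lipschitz in $\bu$, locally uniformly in $t$, so a standard Picard--Lindel\"of argument yields a unique continuous deterministic solution $t\mapsto\bu_t$ on $[0,T]$.

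Next, I would set $Z_t := \bu_t$ and $Y_t := X_t^*\bu_t$ and check directly that $(Y,Z)$ solves the BSDE. Applying the product rule to $Y_t = X_t^*\bu_t$, and using $dX_t = A_t X_{t-}\,dt + dM_t$ together with the fact that $\bu$ is deterministic and continuous,
\[dY_t = X_{t-}^*\,d\bu_t + (A_t X_{t-})^*\bu_t\,dt + (\bu_t)^*\,dM_t.\]
Substituting the ODE for $\bu$ cancels the $X_{t-}^*A_t^*\bu_t\,dt$ contribution and leaves
\[dY_t = -X_{t-}^*\bf(t,\bu_t)\,dt + Z_t^*\,dM_t.\]
Since $X_{t-}$ is almost surely a basis vector, on $\{X_{t-}=e_j\}$ the drift reduces to $-\tilde f(e_j,t,e_j^*\bu_t,\bu_t) = -\tilde f(X_{t-},t,Y_{t-},Z_t) = -f(\omega,t,Y_{t-},Z_t)$, the last equality by the Markovian hypothesis on $f$. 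Combined with $Y_T = X_T^*\bu_T = \phi(X_T)$, this shows that the constructed $(Y,Z)$ satisfies (\ref{eq:dynamics}) with the prescribed terminal condition. By the assumed uniqueness of BSDE solutions, this pair equals the solution of the BSDE (with $Z$ unique only up to $\sim_M$), and setting $u(t,e_i) := e_i^*\bu_t$ delivers all three bullet points.

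I do not anticipate any substantial obstacle; the only delicate step is passing from the vector expression $X_{t-}^*\bf(t,\bu_t)$ to the scalar driver $f(\omega,t,Y_{t-},Z_t)$, which rests on $X_{t-}$ being almost surely a basis vector and on the Markovian hypothesis on $f$. An alternative route would be to verify condition 2 of Theorem \ref{thm:markovstructure} for the constructed pair and invoke the implication $2 \Rightarrow 1$ directly; however that implication presupposes a process already satisfying (\ref{eq:dynamics}), so the forward verification of the dynamics above via Corollary \ref{cor:uODE} run in reverse seems the cleanest approach.
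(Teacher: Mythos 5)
Your proposal is correct and follows essentially the same route as the paper: solve the ODE (\ref{eq:uODE}) backward from $\bu_T=(\phi(e_1),\dots,\phi(e_N))^*$, form the candidate pair $(Y_t,Z_t)=(X_t^*\bu_t,\bu_t)$, check it satisfies (\ref{eq:dynamics}) with terminal value $\phi(X_T)$, and conclude by uniqueness of BSDE solutions. The only difference is that you spell out the product-rule verification that the paper leaves implicit, which is a welcome addition rather than a deviation.
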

\begin{proof}
Simply define $\bu_t$ as the solution to the ODE (\ref{eq:uODE}), working backwards in time, with initial value $e_i^*\bu_T = \phi(e_i)$. Then the pair of processs $(Y_t, Z_t):=(X_t^*\bu_t, \bu_t)$ is a solution to the BSDE with dynamics (\ref{eq:dynamics}), and hence is unique (up to equivalence $\sim_M$ for $Z$). The remaining properties follow directly from the theorem.
\end{proof}

\subsection{Brownian BSDE and semilinear PDE}
It is worth comparing these results with those for BSDEs driven by Brownian motion. In the simplest classical case, suppose the filtration is generated by a scalar Brownian motion $W$, and consider a BSDE of the form
\[dY_t = -f(W_t, t, Y_t, Z_t) + Z_t dW_t; \quad Y_T=\phi(W_T)\]
where $\phi:\bR\to\bR$ is continuous. 

Then, one can show from the Feynman-Kac theorem (see, for example, \cite{El1997} or \cite{Peng1992}) that $Y_t = u(t, W_t)$, where $u$ is the viscosity solution to the semilinear PDE
\[\partial_t u(t, x) = -f(x, t, u(t,x), \partial_x u(t,x)) - \frac{1}{2} \partial^2_{xx}(u(t,x)), \quad u(T, x) = \phi(x).\]

Comparing this to our equation (\ref{eq:uODE}) for $\bu_t$, we see that first difference is that we have replaced the infinitesimal generator of the Brownian motion ($\frac{1}{2} \partial^2_{xx}$), by the infinitesimal generator of the Markov chain ($A^*_t$). 

The second difference is that, in the Brownian case, $f(x,t,\cdot,\cdot)$ depends only on the behaviour of $u$ in a neighbourhood of $x$. In the Markov chain case, as our state space does not have a nice topological structure, it would seem that $e_i^*\bf$ can depend on all the values of $\bu$, not only on those `close' to the $i$th coordinate of $\bu$.

However, when we examine $e_i^* \bf(t, \bu_t) = \tilde f(e_i, t, e_i^*\bu_t, \bu_t)$, we see that $e_i^*\bf$ depends only on the $i$th coordinate of $\bu$, and on those properties of $\bu$ which are invariant up to equivalence $\sim_M$. In particular, if a jump from state $e_i$ to state $e_j$ is not possible, then $e_i^*\bf$ cannot depend on the value of $e_j^*\bu_t$ (as changing this value will give a vector which is equivalent to $\bu_t$ up to equivalence $\sim_M$). Similarly, if a constant is added to every element of $\bu_t$, $e_i^*\bf$ will change only through the dependence on $e_i^*\bu_t$, rather than through any other element of the vector. In this sense, the `local' dependence is preserved by the equivalence relation.

% \section{Malliavin calculus for Markov chains}\label{sec:Malliavin}
% A useful tool in the study of BSDE, as well as in many other areas of stochastic analysis, is the Malliavin Calculus. This calculus of variations permits us to study, in a consistent way, the dependence of the BSDE solution on the choice of terminal condition. In the classical case, as well as in the case of a Levy process, this theory is well known. Some work has also been done in the discrete-time setting of a binomial tree. Directly applicable to our problem is the general approach of Decreusefond \cite{Decreusefond1998}, who develops a Malliavin calculus for Marked Point Processes. We shall present the key results from \cite{Decreusefond1998}, modified for our Markov chain setting, and then apply them to the analysis of our BSDEs.

An alternative way of thinking through the relationship between our result and those known in the Brownian case is through the following diagram, indicating how an equation of one type can be converted into another.
 \[
\begin{CD}
\text{Brownian (B)SDE}   @>\text{Feynman-Kac}>>   \text{Parabolic (semilinear) PDE}\\
@V\text{Space Discretisation}VV                                    @VV{\text{Finite Element Method}}V\\
\text{Markov chain (B)SDE}                 @>(*)>>      \text{Coupled ODE system}
\end{CD}
\]
Our result provides the link indicated by $(*)$. It is natural to think that, given appropriate choices of spatial discretisations and finite element methods, this diagram will commute.

\section{Calculating BSDE solutions}

As we have shown that there is a connection between BSDEs driven by Markov chains and systems of ODEs, it is natural to use this connection for the purposes of computation. As with classical BSDE, this connection can be exploited in both directions, depending on the problem at hand.

As mentioned before, various practical problems can be analysed using the framework of BSDE. Of particular interest are dynamic risk measures and nonlinear pricing systems, as described in \cite{Cohen2008b}. By connecting this theory with the theory of ODEs, we gain access to the large number of tools available for the numerical calculation of ODE solutions, the only concern being the dimensionality of the problem. We note that while we shall consider an example from finance, the same methods can be applied in other areas of stochastic control.

We give a practical example taken from Madan\footnote{Thanks to Dilip Madan for kindly providing us with access to the fitted data from this paper.}, Pistorius and Schoutens \cite{Madan2010}. In this paper a 1600-state Markov chain on a non-uniform spatial grid is created to match the behaviour of a stock price in discrete time, assuming that there exists an underlying Variance-Gamma local L\'evy process, based on the CGMY model of \cite{Carr2004}. The transition probabilities are fitted in discrete time from month to month, yielding a calibrated discrete time risk-neutral transition matrix.

From this matrix, we extract a continuous-time Markov chain approximation, using a carefully constructed approximation of the matrix logarithm. (An approximation is needed as the matrix in question is large, and possibly due simply to calibration error, does not exactly correspond to the skeleton matrix of a continuous time Markov chain. We hope to give the details of this approximation in future work.) For our purposes, the only relevant quantities are the grid used, (that is, the value of the underlying stock in each state), and the rate matrix of the Markov chain.

Let $S(X_t)$ denote the value of the stock in state $X_t$. We shall consider the risk-averse valuation of a contingent claim using our BSDE. To do this, we fix the terminal value as a function of the state $Y_T= \phi(S(X_T))$ for some function $\phi$. We then take the risk-neutral valuation $E[Y_T]$, and dynamically perturb this through the use of a BSDE with concave driver (if the driver were $f\equiv 0$, then we would simply obtain the risk-neutral price $E[Y_T]$). This yields a process $Y_t=u(t,X_t)$, which we interpret as the \emph{ask price} (that is, the amount an agent is willing to pay, at time $t$ in state $X_t$) of the terminal claim $Y_T$.

The value $Y_t$ can be thought of as containing both the risk-neutral price and a correction due to risk-aversion. Through the use of a BSDE, we ensure that this correction can be dynamically updated, and so our prices are consistent through time (they do not admit arbitrage, see \cite{Cohen2008b}).

If we think of $Y_t$ as the ask price, that is, the amount an agent is willing to pay to purchase $Y_T$, then it is natural to also ask how much he would be willing to sell $Y_T$ for, that is, the \emph{bid price}. This value corresponds to the negative of the solution of the BSDE with terminal value $-Y_T$ (as selling $Y_T$ is equivalent to purchasing $-Y_T$, and we change the sign of the final solution so that it represents an inward rather than outward cashflow).

To solve this equation, we then convert our BSDE with driver $f$ and terminal value $\phi(S(X_T))$ into a coupled system of ODEs, using Theorem \ref{thm:markovstructure}. It is then a simple exercise to use any standard ODE toolbox (for our examples we have used the {\tt ode45} IVP solver in Matlab) to solve the relevant ODE system. 

\begin{example}
 Consider the BSDE with driver
\[\tilde f(X_t, t, z)= \min_{r\in [\alpha^{-1}, \alpha]} \{r (z^* A_t X_t)\}.\]
This equation corresponds to uncertainty about the overall rate of jumping from the current state -- the parameter $\alpha\geq 1$ determines the scale of the uncertainty. The uncertainty is, however, only about the overall scale of the jump rate -- the relative rates of jumping into different states remain the same.

Note that this driver is concave, and satisfies the requirements of the comparison theorem in \cite{Cohen2008b}. Hence the solutions to this BSDE give a `concave nonlinear expectation' $\E(Q|\F_t):=Y_t$ in the terminology of \cite{Cohen2008b}. The driver is also positively homogenous (that is, $f(X_t, t, \lambda z) = \lambda f(X_t, t, z)$ for all $\lambda>0$) and so the nonlinear expectation is positively homogenous, that is, it does not depend on the units of measurement.

For our numerical example, we use a timeframe of one month, set $\alpha=1.1$, and calculate the value of a simple Butterfly spread
\[\phi(s) = \begin{cases}
             s &s\in[15,20[\\
	     25-s& s\in[20, 25[\\
	     0& \text{otherwise.}
            \end{cases}
\]

\begin{figure}[htcb]
\begin{center}
\includegraphics[width=10cm]{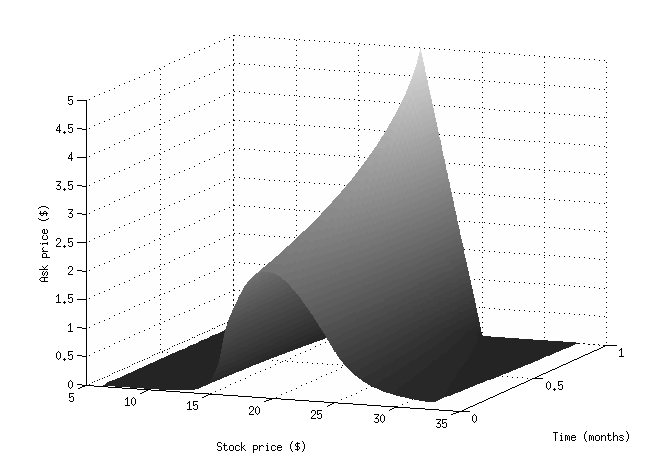}
\caption{Ask price surface for a butterfly spread under rate uncertainty.}\label{fig:1}
\end{center}
%\end{figure}

%\begin{figure}[htcb]
\begin{center}
\includegraphics[width=8cm]{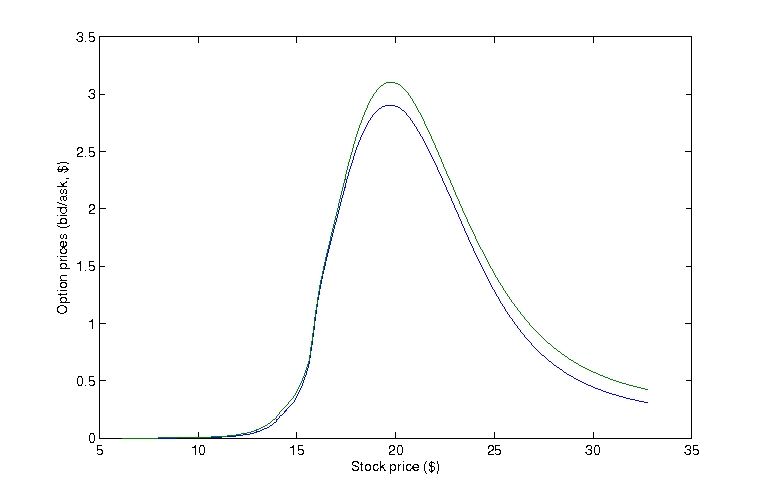}
\caption{Time $t=0$ bid (high) and ask (low) prices for a butterfly spread under rate uncertainty.}\label{fig:2}
\end{center}
\end{figure}

The results of solving the resultant system of ODEs can be seen in Figures \ref{fig:1} and \ref{fig:2}. In Figure \ref{fig:1} we plot the surface generated by plotting the solution of the $i$th term of the ODE against the stock price $S(e_i)$. The assymetry that can be seen in Figure \ref{fig:1} is due to assymetry in the rate matrix of the markov chain -- the volatility of the stock depends on its current level. The prices look qualitatively similar to what one would obtain using the classic expectation (which, in fact would lie between the bid and ask curves in Figure \ref{fig:2}). As one would hope, the bid price lies above the ask price, however we can see that the difference (the bid-ask spread) is not constant, and is higher when the stock price is larger.

An aspect of these prices which is not immediately apparent from the figures is that they are dynamically consistent, that is, if prices evolve in this manner, then one cannot make an arbitrage profit. This follows from the fact that these prices satisfy a BSDE, see \cite{Cohen2008b}.
\end{example}

\begin{example}
 In \cite{Madan2010}, various prices are determined using the estimated discrete time model and a nonlinear pricing rule. The technique used is to apply a concave distortion the the cumulative distribution function of the values one-step ahead. That is, the value at time $0$ of a payoff at time $1$ with cdf $F$ is given by
\[\int_\mathbb{R} x d(1-(1-F(x)^{\frac{1}{1+\gamma}})^{(1+\gamma)})\]
for some `stress level' $\gamma>0$. This is called the \emph{minmaxvar} distortion.

In the same vein, we now consider the use of the driver
\[\tilde f(X_t, t,z)= z^*(\tilde A_t^z-A_t)X_t;\]
where $\tilde A_t^z$ arises from the continuous time analogue of minmaxvar, where we distort the relative rates of jumps to each of the non-current states. This is defined by the following algorithm:

\medskip
\textbf{Minmaxvar rate matrix distortion}
\begin{enumerate}
  \item Sort the components of $z_i$ to give an increasing sequence $z_{\pi(i)}$, where $\pi$ is a permutation of $\{1,...,N\}$.
 \item Define the cumulative sum of the corresponding sorted rates (excluding the current state), 
$G(i)= \sum_{j=1}^i (e_{\pi(j)}^* A_t X_t)^+.$
\item Apply a concave distortion to the scaled cumulative sum
\[\psi(G(i)) = \Bigg(1-\Bigg(1-\Bigg(\frac{G(i)-G(1)}{G(N)-G(1)}\Bigg)^{\frac{1}{1+\gamma}}\Bigg)^{1+\gamma}\Bigg)(G(N)-G(1)) +G(1).\]
\item Define the individual distorted rates $q_i = \psi(G(i))-\psi(G(i-1))$, with initial term $q_1=G(1)$;
\item Unsort these rates to define a distorted rate matrix $\tilde A^z_t$
\[e_i^*\tilde A_t^z X_t = q_{\pi^{-1}(i)}, \qquad \{e_i\neq X_t\}\]
with the diagonal then chosen to give row sums of zero.
\end{enumerate}
\medskip

We apply this pricing mechanism to a digital option with a knockout barrier, that is, a payoff $\phi$ where
\[\phi=\begin{cases} 0 & \text{if }S(X_t)\text{ is ever above }25\\
        1& \text{if }S(X_T)> 15 \text{ and }S(X_t)<25\text{ for all }t\\
	0& \text{otherwise}
       \end{cases}
\]
This type of simple barrier option is straightforward to calculate, as we simply ensure that our solution satisfies the additional boundary condition $u(t, X_t)=0$ for all states where $S(X_t)\geq 25$. Our solution is then the value of the option, conditional on the barrier not having been hit. The results of this, using a stress level $\gamma=0.1$, can be seen in Figures \ref{fig:3} and \ref{fig:4}.

\begin{figure}[htcb]
\begin{center}
\includegraphics[width=10cm]{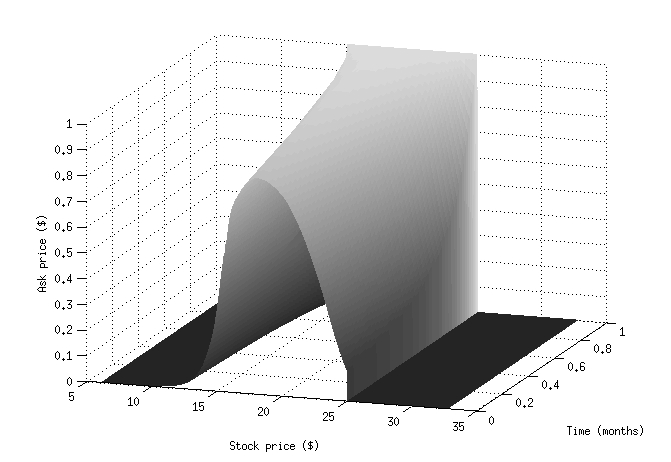}
\caption{Ask price surface for a digital knockout option under minmaxvar, $\gamma=0.1$.}\label{fig:3}
\end{center}
%\end{figure}

%\begin{figure}[htcb]
\begin{center}
\includegraphics[width=8cm]{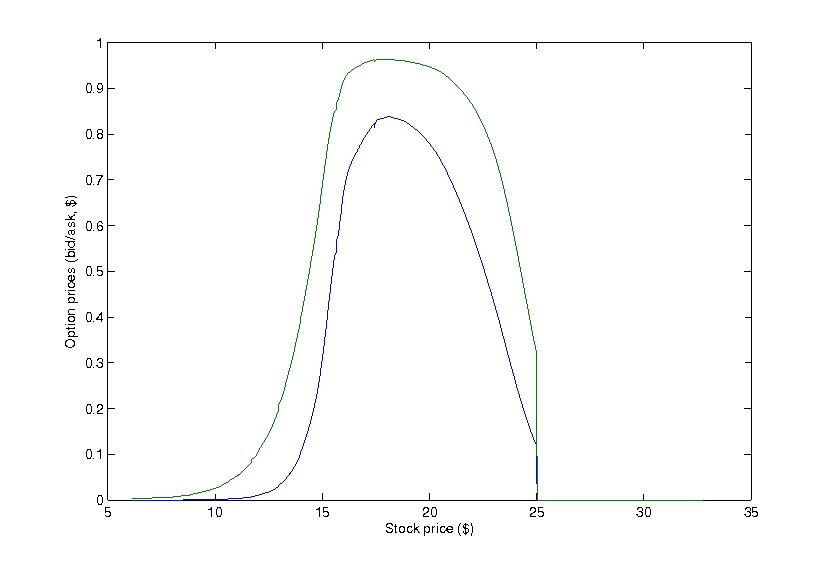}
\caption{Time $t=0$ bid (high) and ask (low) prices for a digital knockout option under minmaxvar, $\gamma=0.1$.}\label{fig:4}
\end{center}
\end{figure}

Again we can see the effects of risk aversion in the differences of the bid and ask prices in Figure \ref{fig:4}. The effects of the knockout barrier are also clear, as it causes a sharp change in the prices at the boundary.
\end{example}

\section{Calculating ODE solutions}

It is well known that a potential application of the theory of Brownian BSDEs is to provide stochastic methods for large PDE systems. In our situation, we have seen that the natural relation is not between a BSDE and a PDE, but a BSDE and a system of coupled ODE. We therefore can consider adaptations of the stochastic methods for solving BSDE as candidates for novel schemes for solving large systems of coupled nonlinear ODE. As numerical algorithms for ODEs are generally very good, we do not expect that this method will be of use except in some extreme cases. For this reason, we simply outline the algorithm.

To implement such a method, consider the following general setting. Suppose we have a system of ODEs of the form
\begin{equation}\label{eq:forODE}
 d\mathbf{v}_t = g(t, \mathbf{v}_t) dt; \quad \mathbf{v}_0= \phi,
\end{equation}
and the object of particular interest is the value of $e_k^* \mathbf{v}_T$, that is, the value of the $k$th component of $\mathbf{v}$ at some future time $T$. (This method can, of course, be modified to give all components, however is particularly well suited to when our interest is in a single component.)

We split the ODE (\ref{eq:forODE}) into the form
\begin{equation}\label{eq:forODEsplit}
 d\mathbf{v}_t = (\mathbf{f}(t, \mathbf{v}_t) + A_t^*\mathbf{v}_t) dt
\end{equation}
where $A_t$ is a rate matrix (that is, a matrix with nonnegative entries off the main diagonal and all row sums equal to zero) and $\mathbf{f}$ is a function with the property that there exists $c$ such that
\[|e_i^*(f(t, \mathbf{v}_t) - f(t, \mathbf{v}'_t))|^2 \leq c (\mathbf{v}_t- \mathbf{v}'_t)^* \Psi(A_t, e_i) (\mathbf{v}_t- \mathbf{v}'_t)\]
for all $i$, where $\Psi(\cdot, \cdot)$ is as in (\ref{eq:Mnorm}). Practically, this means that the $i$th component of $f$ can only depend on the $j$th component of $\mathbf{v}$ when $(A_t)_{ij} >0$.

We then reverse time, defining $\mathbf{u}_t = \mathbf{v}_{T-t}$, so that (\ref{eq:forODEsplit}) becomes precisely the ODE we obtain from our BSDE (\ref{eq:uODE}). Therefore, we have converted our problem into determining the initial value of a BSDE with terminal value $X_t^*\phi$, when $X$ is a Markov chain with rate matrix $A_t$. Furthermore, as our interest is in the value of the $k$th component of $v$, we are interested in the initial value of our BSDE when $X_0=e_k$.

We now outline the natural modification of the algorithm of Bouchard and Touzi \cite{Bouchard2004} for the calculation of solutions to these BSDEs. We shall not give a proof of the convergence of this algorithm, however it is natural to believe that the conditions for convergence from the Brownian setting will carry over to the setting of Markov Chains.

\medskip
\textbf{Monte-Carlo Algorithm for Markov Chain BSDE}

First fix a discretisation level $\Delta t=T/M$.
\begin{enumerate}
 \item (Forward step) Simulate a large number paths of the Markov chain beginning in state $X_0=e_k$, on the discretised grid $\{k\Delta t\}_{k\leq M}$. Write $X_{t}^n$ for the value of the $n$th simulation of the Markov chain at time $t$. The method of simulating this forward step can be chosen for the sake of computational convenience. The cost of doing this will depend on the size of the components of the matrix $A_t$.
 \item (Backward step) Iterate backwards from $T$ to $0$ in steps of size $\Delta t$, at each step, we seek to create an estimate of the function $u_t= u(t, \cdot):\{1,2,...,N\}\to\mathbb{R}$ based on the function $u_{t+1}$ (where $N$ is the number of ODEs).
\begin{enumerate}
\item Use the function $u_{t+1}$ to calculate the values of $f^n:=(X_{t+1}^n)^*f(t, u_{t+1})$ (recall that as $X^n_{t+1}$ takes values from the standard basis vectors of $\mathbb{R}^N$, this simply selects out some terms of the function $f$.
\item Calculate
\[\hat u_{t+1}^n= u_{t+1}(X_{t+1}^n) - f^n\Delta t.\]
We wish to approximate
\[u_t(x)\approx E[\hat u_{t+1}(X_{t+1})|X_t=x]\]
This can be done using a Longstaff-Schwarz technique (see \cite{Longstaff2001}), using an appropriate basis $\{\psi_m\}$ for functions on $\{1,2,...,N\}$. To do this, we choose coefficients $c_m$ to minimise
\[\sum_n \left(\sum_m(c_m\phi_m(X_{t+1}^n))-\hat u_{t+1}(X_{t+1}^n)\right)^2\]
This gives the least-squares approximation
\[u_t(x) = \sum_m(c_m\phi_m(x)).\]
\item Repeat until an estimate for $u_0$ is obtained.
\end{enumerate}
We note that the accuracy and cost of the backward step primarily depends on the size and computational cost of evaluating $f$.
\end{enumerate}
This algorithm is particularly well suited for calculating the values $u_0(e_k)=e_k^*\mathbf{v}_T$, as the simulated paths can be chosen to start in state $e_k$. Hence the algorithm will not attempt to accurately calculate values of $u_t(e_{k'})=e_{k'}\mathbf{v}_{T-t}$ for combinations of $t$ and $k'$ which have little impact on $e_k^*\mathbf{v}_T$. We also note that there is a trade off between the costs of the forward and backward processes, depending on the decomposition from (\ref{eq:forODE}) to (\ref{eq:forODEsplit}). Typically, it seems that the forward process is relatively cheap to simulate accurately, so it is preferable to try and minimise the size of $f$.

\section{Conclusion}

We have considered solutions to Backward Stochastic Differential Equations where noise is generated by a Markov chain. We have seen that solutions to these equations are markovian, that is, they can be expressed as a deterministic function of the current state, if and only if they come from the solution of a certain coupled ODE system.

Consequently, calculating the Markovian solutions to these BSDEs is a simple matter of evaluating an initial value problem, for which many good numerical methods exist. This has applications to calculating risk-averse prices (for example, bid-ask prices) for market models driven by Markov chains. We have also seen how this suggests new stochastic methods for the solution of large ODE systems.

\bibliographystyle{plain}
\bibliography{../../RiskPapers/General}
\end{document}